\newtheorem{theorem}{Theorem}
\newtheorem{lemma}[theorem]{Lemma}
\theoremstyle{definition}
\newtheorem{example}[theorem]{Example}
\newtheorem{notation}{Notation}
\theoremstyle{remark}
\newtheorem{remark}[theorem]{Remark}
\begin{document}

\title{On algorithms for testing positivity of symmetric polynomial functions}

\author{Vlad Timofte}
\address{Institute of Mathematics ``Simion Stoilow'' of the Romanian Academy, P.O. Box 1-764, RO-014700 Bucharest, Romania}
\email{vlad.timofte@imar.ro}
\author{Aida Timofte}
\address{Institute of Mathematics ``Simion Stoilow'' of the Romanian Academy, P.O. Box 1-764, RO-014700 Bucharest, Romania}
\email{aida.timofte@imar.ro}

\subjclass[2010]{03C10; 14Q15; 68W30}
\date{November 9, 2020}
\keywords{quantifier elimination; Tarski-Seidenberg; algorithm; polynomial time; symmetric polynomial; Maple.}

\begin{abstract}
We show that positivity on $\mathbb{R}_+^n$ and on $\mathbb{R}^n$ of real symmetric polynomials of degree at most $p$ in $n\ge2$ variables is solvable by algorithms running in $\mathrm{poly}(n)$ time. For real symmetric quartics, we find explicit discriminants and related Maple algorithms running in $\mathrm{lin}(n)$ time.
\end{abstract}

\maketitle

\section{Introduction}\label{s.introduction}

In this paper we deal with special cases of the quantifier elimination problems
\[\begin{array}{ll} f(x)\ge0&\forall x\in\mathbb{R}_+^n,\\ f(x)\ge0&\forall x\in\mathbb{R}^n, \end{array}\]
which will be referred to as $\mathrm{QE}_+(f)$ and $\mathrm{QE}(f)$ respectively, for real polynomials $f\in\mathbb{R}[X_1,\dots,X_n]$.

Let us denote\footnote{We follow the notations from~\cite{timofte1,timofte2,timofte3}.} by $\Sigma_p^{[n]}$ the vector space of all real symmetric polynomials of degree at most $p\in\mathbb{N}$ in $n\ge2$ indeterminates, and by ${\mathcal H}_p^{[n]}$ its subspace consisting of $p$-homogeneous polynomials (real symmetric $n$-ary $p$-forms). For symmetric cubics $f\in{\mathcal H}_3^{[n]}$, it is known from~\cite[Th.\,3.7]{c.l.r} that $\mathrm{QE}_+(f)$ holds, if and only if
\begin{equation}\label{e.k}f(1_k,0_{n-k})\ge0\quad\mbox{for every\, }k\in\{1,2,\dots,n\},\end{equation}
where $1_k:=(1,\dots,1)\in\mathbb{R}^k$ and $0_k:=(0,\dots,0)\in\mathbb{R}^k$ for every $k$. This equivalence is no longer true for higher degree, but still holds for $f\in{\mathcal H}_4^{[n]}$ (\emph{symmetric quartic}) under the additional condition (see~\cite[Th.\,19(2)]{timofte2})
\[f(1,-1,0_{n-2})\le0.\]
For $\mathrm{QE}_+(f)$ and $\mathrm{QE}(f)$, the mere existence of an equivalent boolean combination of polynomial inequalities in the coefficients of $f$ follows by the Tarski-Seidenberg principle (see~\cite{b.c.r,jacobson} for details). For every fixed degree upper bound $p\in\mathbb{N}^*$, the problems $\mathrm{QE}_+(f)$ and $\mathrm{QE}(f)$ for $\deg(f)\le p$ are well-known to be unsolvable in $\mathrm{poly}(n)$ time (see for instance~\cite{c.c.s}). Existing algorithms cannot be efficient for the problems $\mathrm{QE}_+(f)$ and $\mathrm{QE}(f)$ restricted to symmetric polynomials, since they perform the same operations as in the general case. Nonetheless, in the symmetric case specific algorithms running in $\mathrm{poly}(n)$ time can be designed.

For symbolic quartic $f\in{\mathcal H}_4^{[n]}$, we compute explicit systems of discriminants for both $\mathrm{QE}_+(f)$ and $\mathrm{QE}(f)$. By combining and strengthening some results from~\cite{timofte1,timofte2}, both problems are reduced to equivalent finite systems of univariate polynomial inequalities of degree at most $4$, for which we find explicit discriminants. The resulting algorithms $\mathrm{QE}4_+$ and $\mathrm{QE}4$ from Sections~\ref{ss.algorithm1} and~\ref{ss.algorithm2} run in $\mathrm{lin}(n)$ time. Related numerical and theoretical examples are discussed in Sections~\ref{ss.algorithm1} and~\ref{ss.algorithm2}.

\section{Existence of efficient algorithms in the symmetric case}\label{s.efficient}

\begin{theorem}[efficient algorithms]\label{t.efficient}
There exists an algorithm solving for every fixed degree upper bound $p\in\mathbb{N}^*$ the problem $\mathrm{QE}_+(f)/\mathrm{QE}(f)$ for arbitrary $f\in\Sigma_p^{[n]}$ in $\mathrm{poly}(n)$ time. The statement also holds for the corresponding problems defined with strict inequalities.
\end{theorem}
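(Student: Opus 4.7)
The plan is to combine Timofte's half-degree principle from~\cite{timofte1,timofte2} with the observation that quantifier elimination in a bounded number of variables and of bounded degree takes constant time.

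First, I would invoke the half-degree principle: there is an integer $k=k(p)$, depending only on $p$, such that for every $f\in\Sigma_p^{[n]}$ the statement $f\ge0$ on $\mathbb{R}^n$ (resp.\ on $\mathbb{R}_+^n$) is equivalent to $f\ge0$ on the set of points having at most $k$ distinct coordinate values. Up to the action of $S_n$, any such point is determined by an index $1\le j\le k$, distinct reals $c_1,\dots,c_j$, and a composition $(m_1,\dots,m_j)$ of $n$ recording the multiplicities. Since $f$ is symmetric, $\mathrm{QE}_+(f)$ (resp.\ $\mathrm{QE}(f)$) reduces to the conjunction over all such compositions of the sub-problems
\[g_{(m_1,\dots,m_j)}(c_1,\dots,c_j)\ge0\quad\text{on }\mathbb{R}_+^j\ (\text{resp.\ }\mathbb{R}^j),\]
where $g_{(m_1,\dots,m_j)}\in\mathbb{R}[Y_1,\dots,Y_j]$ is obtained from $f$ by setting $m_i$ of the original variables equal to $Y_i$.

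Next, I would bound the number and cost of these sub-problems. Compositions of $n$ into $j\le k$ parts number $\sum_{j=1}^{k}\binom{n-1}{j-1}=O(n^{k-1})=\mathrm{poly}(n)$, since $k$ depends only on $p$. Each $g_{(m_1,\dots,m_j)}$ has at most $k$ variables and degree at most $p$, so its coefficients are computable from those of $f$ in $\mathrm{poly}(n)$ time; its quantifier-elimination test is then solved in constant time (depending only on $p$) by any fixed-size QE routine such as CAD~\cite{b.c.r}. Conjoining all outputs yields a $\mathrm{poly}(n)$-time algorithm.

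The strict versions follow by the same route, using the strict analogue of the half-degree principle, which is also available in~\cite{timofte1,timofte2}. The main obstacle in turning this sketch into a rigorous proof is producing a uniform statement of the reduction covering all four cases ($\ge$ versus $>$, on $\mathbb{R}^n$ versus $\mathbb{R}_+^n$); once that is in hand, the algorithmic conclusion is a routine enumeration plus a black-box fixed-size quantifier elimination call.
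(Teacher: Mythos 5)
Your proposal is correct and is essentially the paper's own argument: the ``half-degree principle'' you invoke is precisely Corollary~2.1 of~\cite{timofte1} that the paper uses, and the decomposition into sub-problems indexed by multiplicity patterns (with bounded number of parts $\bar p = \max\{\lfloor p/2\rfloor,1\}$ for $\mathrm{QE}_+$ and $\max\{p/2,2\}$ for $\mathrm{QE}$) is exactly the paper's families $R_{\bar p,n}^+$ and $R_{\bar p,n}$. The remaining counting and the constant-time black-box QE call on each fixed-size sub-problem match the paper's proof, so no genuine gap remains beyond pinning down the exact value of $k(p)$.
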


\begin{proof}
For all $p,q\in\mathbb{N}^*$, set ${\mathcal P}_p^{[q]}:=\{g\in\mathbb{R}[X_1,\dots,X_q]\,|\,\deg(g)\le p\}$ and
\[R_{p,q}:=\bigg\{r=(r_1,\dots,r_p)\in(\mathbb{N}^*)^p\,\bigg|\,\sum_{i=1}^pr_i=q\bigg\},\quad R_{p,q}^+:=\bigcup_{k=1}^qR_{p,k}.\]
\emph{The case of} $\mathrm{QE}_+(f)$. There is an algorithm\footnote{Several such algorithms are known, but their structure is not relevant here.} ${\mathcal A}_+$ which solves the problem $\mathrm{QE}_+(g)$ for arbitrary $g\in{\mathcal P}_p^{[q]}$ by performing $N_+(p,q)$ operations. Let us fix $p\in\mathbb{N}^*$ and set $\bar p:=\max\{\lfloor\frac p2\rfloor,1\}$. For all $f\in\Sigma_p^{[n]}$ and $r\in R_{\bar p,n}^+$, let us define $f_r\in{\mathcal P}_p^{[\bar p]}$ by
\[f_r(u_1,\dots,u_{\bar p}):=f(u_1\!\cdot\!1_{r_1},\,\dots\,,u_{\bar p}\!\cdot\!1_{r_{\bar p}},0_{r'}),\]
where $r'=n-\sum_{i=1}^pr_i$. According to~\cite[Cor.\,2.1(1)]{timofte1}, for every $f\in\Sigma_p^{[n]}$ we have
\begin{equation}\label{e.QE+}\mathrm{QE}_+(f)\mbox{ holds}\iff\mathrm{QE}_+(f_r)\mbox{ holds for every }r\in R_{\bar p,n}^+.\end{equation}
Let us consider the algorithm $\tilde{\mathcal A}_+$ which solves the problem $\mathrm{QE}_+(f)$ by running ${\mathcal A}_+$ for all $\mathrm{QE}_+(f_r)$ ($r\in R_{\bar p,n}^+$). The number of operations performed by $\tilde{\mathcal A}_+$ on $f$ is
\[N(\tilde{\mathcal A}_+,f)=N_+(p,\bar p)\cdot\mathrm{card}(R_{\bar p,n}^+)=N_+(p,\bar p)\binom{n}{\bar p}.\]
Hence for fixed $p$, the problem $\mathrm{QE}_+(f)$ for $f\in\Sigma_p^{[n]}$ is solvable in $O(n^{\bar p})$ time.\\
\emph{The case of} $\mathrm{QE}(f)$. Let us note that $\mathrm{QE}(f)$ cannot hold if $\deg(f)$ is odd. There is an algorithm ${\mathcal A}$ which solves the problem $\mathrm{QE}(g)$ for arbitrary $g\in{\mathcal P}_p^{[q]}$ by performing $N(p,q)$ operations. Let us fix an even integer $p\ge2$ and set $\bar p:=\max\{\frac p2,2\}$. For all $f\in\Sigma_p^{[n]}$ and $r\in R_{\bar p,n}$, let us define $f_r\in{\mathcal P}_p^{[\bar p]}$ by
\[f_r(u_1,\dots,u_{\bar p}):=f(u_1\!\cdot\!1_{r_1},\,\dots\,,u_{\bar p}\!\cdot\!1_{r_{\bar p}}).\]
According to~\cite[Cor.\,2.1(2)]{timofte1}, for every $f\in\Sigma_p^{[n]}$ we have the equivalence
\begin{equation}\label{e.QE}\mathrm{QE}(f)\mbox{ holds}\iff\mathrm{QE}(f_s)\mbox{ holds for every }r\in R_{\bar p,n}.\end{equation}
Let us consider the algorithm $\tilde{\mathcal A}$ which solves the problem $\mathrm{QE}(f)$ by running $\mathcal A$ for all $\mathrm{QE}(f_r)$ ($r\in R_{\bar p,n}$). The number of operations performed by $\tilde{\mathcal A}$ on $f$ is
\[N(\tilde{\mathcal A},f)=N(p,\bar p)\cdot\mathrm{card}(R_{\bar p,n})=N(p,\bar p)\binom{n-1}{\bar p-1}.\]
Hence for fixed $p$, the problem $\mathrm{QE}(f)$ for $f\in\Sigma_p^{[n]}$ is solvable in $O(n^{\bar p-1})$ time.\\
\emph{The case of strict inequalities}. For both problems the proof using Corollary~2.1 from~\cite{timofte1} is similar to the above.
\end{proof}

\section{The problem $\mathrm{QE}_+(f)$ in ${\mathcal H}_4^{[n]}$}\label{s.QE1}

\subsection{Finite test-sets for $\mathrm{QE}_+(f)$}\label{ss.test}

According to (\ref{e.QE+}), for every symmetric quartic $f\in{\mathcal H}_4^{[n]}$ the problem $\mathrm{QE}_+(f)$ reduces to the quantifier elimination problems
\[f(u\!\cdot\!1_r,v\!\cdot\!1_s,0_{n-r-s})\ge0\quad\forall u,v\ge0,\]
considered for all $r,s\in\mathbb{N}^*$, with $r+s\le n$. Since $p=4$ and $\bar p=2$, the algorithm $\tilde{\mathcal A}_+$ described in the proof of Theorem~\ref{t.efficient} solves $\mathrm{QE}_+(f)$ in $O(n^2)$ time. Theorem~\ref{t.quartics} below will lead to the algorithm $\mathrm{QE}4_+$, which solves $\mathrm{QE}_+(f)$ in $\mathrm{lin}(n)$ time.

For every $f\in{\mathcal H}_4^{[n]}$ we assume the representation
\begin{equation}\label{e.rep}f=aP_4+bP_3P_1+cP_2^2+dP_2P_1^2+eP_1^4\quad(a,b,c,d,e\in\mathbb{R}),\end{equation}
where $P_k$ denotes the $k$th symmetric power sum $P_k(x_1,\dots,x_n)=\sum_{j=1}^nx_j^k$. For all $f\in{\mathcal H}_4^{[n]}$ and $(r,s)\in\mathbb{N}^*\times\mathbb{N}^*$ with $r+s\le n$, let us define
\begin{eqnarray*}
&&f_{r,s}:\mathbb{R}\rightarrow\mathbb{R},\quad f_{r,s}(t)=f(t\!\cdot\!1_r,1_s,0_{n-r-s}),\\
&&f_{r,s}^\partial:\mathbb{R}\setminus\{1\}\rightarrow\mathbb{R},\quad f_{r,s}^\partial(t)=\frac{\left(\frac{\partial f}{\partial x_1}-\frac{\partial f}{\partial x_{r+1}}\right)(t\!\cdot\!1_r,1_s,0_{n-r-s})}{t-1}.
\end{eqnarray*}
An easy computation shows that
\[f_{r,s}^\partial(t)=4a(t^2+t+1)+3b(t+1)(rt+s)+4c(rt^2+s)+2d(rt+s)^2.\]
Hence $f_{r,s}^\partial$ is the restriction of a polynomial function (for which we use the same notation) with\ $\deg(f_{r,s}^\partial)\le2$. We have the obvious identities
\begin{equation}\label{e.dual}f_{r,s}(t)=t^4f_{s,r}\mbox{$\left(\frac{1}{t}\right)$}\mbox{ \ and \ } f_{r,s}^\partial(t)=t^2f_{s,r}^\partial\mbox{$\left(\frac{1}{t}\right)$, \ for every }t\in\mathbb{R}^*.\end{equation}

Let $Z^f\subset\mathbb{N}^*\times\mathbb{N}^*$ denote the finite set consisting of all points of the form
\[\left\{\begin{array}{ll}(k,1),(1,k),(k,n-k)&\mbox{ for }k\in\{1,2,\dots,n-1\},\mbox{ if }a>0>b,\\
(k,1),(1,k)&\mbox{ for }k\in\{1,2,\dots,n-1\},\mbox{ else}.\end{array}\right.\]
Let us consider the finite sets
\begin{eqnarray*}
M^f\!\!\!&:=&\!\!\!\{(r,s)\in Z^f\,|\, f_{r,s}^\partial\mbox{ is non-constant}\},\\
T_{r,s}^f\!\!\!&:=&\!\!\!\{(t\!\cdot\!1_r,1_s,0_{n-r-s})\in\mathbb{R}^n\,|\,t>1,\,f_{r,s}^\partial(t)=0\}\mbox{ for every }(r,s)\in M^f,\\
T^f\!\!\!&:=&\!\!\!\{(1_k,0_{n-k})\,|\,k\in\{1,2,\dots,n\}\}\cup\bigcup_{(r,s)\in M^f}T_{r,s}^f.
\end{eqnarray*}
The following theorem is a combination of three known results from~\cite[Ths.\,13,14]{timofte2} for the equivalences (\ref{e.equivalence1}) and (\ref{e.equivalence2}), and from~\cite[Cor.\,5.6]{timofte1} for (\ref{e.equivalence3}).

\begin{theorem}[finite test sets]\label{t.quartics}
For every $f\in{\mathcal H}_4^{[n]}$, we have the equivalences:
\begin{eqnarray}
\label{e.equivalence1}\mathrm{QE}_+(f)\mbox{ holds}\!\!\!&\iff&\!\!\!\mathrm{QE}_+(f_{r,s})\mbox{ holds for every }(r,s)\in Z^f\\
\label{e.equivalence2}&\iff&\!\!\!f(\xi)\ge0\mbox{\, for every\, }\xi\in T^f.
\end{eqnarray}
\end{theorem}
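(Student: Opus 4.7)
The plan is to derive both equivalences from the half-degree principle of Corollary~2.1 in~\cite{timofte1}, combined with a one-variable analysis of each restriction $f_{r,s}$ via the factorization supplied by $f_{r,s}^\partial$.

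For (\ref{e.equivalence1}), I would start from (\ref{e.QE+}) applied with $\bar p = 2$: it already reduces $\mathrm{QE}_+(f)$ to $\mathrm{QE}_+(f_r)$ for every $r = (r_1,r_2)\in R_{2,n}^+$, that is, to testing $f(u\!\cdot\!1_{r_1},v\!\cdot\!1_{r_2},0) \ge 0$ for all $u,v \ge 0$. By $4$-homogeneity this collapses to $f_{r_1,r_2}(t) \ge 0$ on $[0,\infty)$ together with the leading coefficient, yielding the weaker version of~(\ref{e.equivalence1}) indexed by all $(r,s)$ with $r+s\le n$. To prune down to $Z^f$ I would use the duality $f_{r,s}(t) = t^4 f_{s,r}(1/t)$ of~(\ref{e.dual}), which halves the count, and then invoke the extremal analysis of~\cite[Ths.\,13,14]{timofte2}: each coefficient of $f_{r,s}(t)$ is a polynomial of bounded degree in $(r,s)$, so positivity for intermediate pairs is implied by the boundary pairs $(k,1)$ and $(1,k)$. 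The exceptional family $(k, n-k)$ enters only in the regime $a > 0 > b$, where the asymptotic $P_4$-term dominates at extreme $t$ while the mixed $P_3P_1$-term pulls the function negative on an intermediate range that the ``tip'' cases cannot detect.

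For (\ref{e.equivalence2}) my strategy is to characterize non-negativity of each univariate quartic $f_{r,s}$ on $[0,\infty)$ by a finite test set: non-negativity at $t = 0$, at $t = \infty$ (the leading coefficient), and at every interior critical point. A direct expansion shows that $f_{r,s}(0) = f(1_s, 0_{n-s})$, that the leading coefficient equals $f(1_r, 0_{n-r})$, and that $f_{r,s}(1) = f(1_{r+s}, 0_{n-r-s})$, so all three ``boundary'' values sit in the vertex set $\{(1_k, 0_{n-k})\}$. For the interior critical points I would apply Euler's identity $4 f(x) = \sum_i x_i \partial_i f(x)$ for the $4$-homogeneous $f$ together with the chain rule to verify the factorization $(\partial_1 f - \partial_{r+1} f)(t\!\cdot\!1_r, 1_s, 0) = (t-1)\, f_{r,s}^\partial(t)$, which identifies the critical points of $f$ on the two-parameter slice either with $t = 1$ (a vertex) or with zeros of $f_{r,s}^\partial$. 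The restriction $t > 1$ in $T_{r,s}^f$ removes duplicates via the companion identity $f_{r,s}^\partial(t) = t^2 f_{s,r}^\partial(1/t)$ from~(\ref{e.dual}), and dropping the pairs with constant $f_{r,s}^\partial$ is harmless because in that case the only interior critical value of $f_{r,s}$ is covered by the vertex $t = 1$. This is the content of~\cite[Cor.\,5.6]{timofte1}.

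The main obstacle is the combinatorial pruning from $\{(r,s) : r+s \le n\}$ to the linear-size $Z^f$, together with the case split on the signs of $a$ and $b$. The univariate characterization and the derivation of $(t-1) f_{r,s}^\partial(t)$ are essentially algebraic identities, whereas deciding which pairs $(r,s)$ are redundant requires tracking how $f_{r,s}$ varies as $(r,s)$ moves through the lattice, which is the genuinely delicate step and is handled in~\cite{timofte2}.
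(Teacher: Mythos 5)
Your plan for~(\ref{e.equivalence1}) matches the paper: both you and the authors reduce to~\cite[Th.\,14]{timofte2} and halve the index set via the duality~(\ref{e.dual}); the paper simply states that~(\ref{e.equivalence1}) \emph{is} a restatement of that cited theorem, so the pruning heuristic you sketch is delegated, which is fine.

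For~(\ref{e.equivalence2}), however, there is a genuine gap. You propose to certify $f_{r,s}\ge0$ on $[0,\infty)$ by testing the value at $t=0$, the leading coefficient, and ``every interior critical point,'' and you then identify those interior critical points with the zeros of $(t-1)\,f_{r,s}^\partial(t)$ via Euler's identity. But these are two \emph{different} sets of points. The interior critical points of the univariate quartic $f_{r,s}$ are the roots of $f_{r,s}'(t)=4A_{r,s}t^3+3B_{r,s}t^2+2C_{r,s}t+D_{r,s}$, whereas Euler's identity gives
\[
(t-1)\,f_{r,s}^\partial(t)\;=\;\frac{(rt+s)\,f_{r,s}'(t)-4r\,f_{r,s}(t)}{rs},
\]
so at a root $\theta$ of $f_{r,s}'$ one has $(\theta-1)f_{r,s}^\partial(\theta)=-\tfrac{4}{s}f_{r,s}(\theta)$, which is generally nonzero. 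In other words, the roots of $f_{r,s}'$ and the roots of $(t-1)f_{r,s}^\partial(t)$ do not coincide, so knowing $f\ge0$ on $T^f$ does not directly control $f_{r,s}$ at its own critical points. The zeros of $(t-1)f_{r,s}^\partial(t)$ are the \emph{Lagrange-constrained} critical points of $f$ restricted to the simplex $\{ru+sv=1,\;u,v\ge0\}$ inside the slice, not the critical points of $t\mapsto f_{r,s}(t)$ along a ray. You gesture at this (``critical points of $f$ on the two-parameter slice''), but your stated strategy is to test $f_{r,s}$ at \emph{its own} critical points, and the identification you make does not hold.

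The paper closes exactly this gap by passing from $f_{r,s}$ to the normalized map $g(t)=f\!\bigl(\tfrac{t}{rt+s}1_r,\tfrac{1}{rt+s}1_s,0\bigr)$ on $[1,\infty)$, which is (up to reparametrization) the restriction of $f$ to the compact simplex segment. Then $g$ extends continuously to $[1,\infty]$, so its infimum is attained; the boundary values are controlled by~(\ref{e.k}); and the clean factorization $g'(t)=\tfrac{rs(t-1)}{(rt+s)^5}\,f_{r,s}^\partial(t)$ identifies any interior minimum with a root of $f_{r,s}^\partial$ in $(1,\infty)$. Because $f_{r,s}(\theta)=(r\theta+s)^4 g(\theta)$, a negative interior minimum of $g$ then contradicts $f|_{T^f}\ge0$. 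The interval $[0,1]$ is handled by the duality~(\ref{e.dual}), as you anticipated. To repair your argument you would need to make this normalization (or an equivalent Lagrange-multiplier-plus-compactness step) explicit; as written, the inference from ``$f\ge0$ on $T^f$'' to ``$f_{r,s}\ge0$ at its own critical points'' does not follow. The constant-$f_{r,s}^\partial$ case also needs the same repair: in the paper it is harmless because then $g$ is monotone on $(1,\infty)$, not because the critical set of $f_{r,s}$ reduces to $t=1$.
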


\begin{proof}
For every $(r,s)\in\mathbb{N}^*\times\mathbb{N}^*$ with $r+s\le n$, by (\ref{e.dual}) we deduce that
\[\mathrm{QE}_+(f_{r,s})\iff\mathrm{QE}_+(f_{s,r}).\]
Thus by the definition of $Z^f$ we see that (\ref{e.equivalence1}) is a restatement of~\cite[Th.\,14]{timofte2}.\\
For (\ref{e.equivalence2}), we only need to prove the implication ``$\Leftarrow$''. Fix $(r,s)\in Z^f$. We claim that $f_{r,s}\ge0$ on $[1,\infty[$. As $f$ is homogeneous, this holds if and only if $g\ge0$, where
\[g:[1,\infty[\,\rightarrow\mathbb{R},\quad g(t)=f\left(\frac{t}{rt+s}\!\cdot\!1_r,\,\frac{1}{rt+s}\!\cdot\!1_s,\,0_{n-r-s}\right).\]
Assume that $g\ge0$ does not hold. Some trivial computations show that
\begin{eqnarray}\label{e.lim}&&g(1)=\frac{f(1_{r+s},0_{n-r-s})}{(r+s)^4}\ge0,\quad\lim_{t\to\infty}g(t)=\frac{f(1_r,1_s,0_{n-r-s})}{r^4}\ge0,\\
\label{e.g'}&&g'(t)=\frac{rs(t-1)}{(rt+s)^5}f_{r,s}^\partial(t) \ \mbox{ for every }t\ge1.\end{eqnarray}
By our assumption on $g$ and (\ref{e.lim}), we see that $g$ has a global minimum at some $\theta>1$ and $g(\theta)<0$. By (\ref{e.g'}) it follows that $f_{r,s}^\partial(\theta)=0$, hence that $(r,s)\in M^f$ (otherwise, $g$ is constant), and finally that $\xi:=(\theta\cdot1_r,1_s,0_{n-r-s})\in T_{r,s}^f$. We thus get $(r\theta+s)^4g(\theta)=f_{r,s}(\theta)=f(\xi)\ge0$, a contradiction. Our claim is proved. We conclude that $f_{r,s}\ge0$ on $[1,\infty[$ for every $(r,s)\in Z^f$. Now using (\ref{e.dual}) and the symmetry of $Z^f$ shows that $f_{r,s}\ge0$ on $[0,1]$ for every $(r,s)\in Z^f$.
\end{proof}


For a weaker version of (\ref{e.equivalence1}) and for other results on $\mathrm{QE}_+(f)$ in the setting of ternary ($n=3$) even symmetric octics, we refer the reader to~\cite{harris}.

\subsection{Discriminants for $\mathrm{QE}_+(f)$}\label{ss.discriminants1}

The equivalence (\ref{e.equivalence1}) reduces $\mathrm{QE}_+(f)$ to finitely many univariate problems $\mathrm{QE}_+(f_{r,s})$. For quartics $f$ with numerical coefficients, one can apply the classical Sturm theorem to each $\mathrm{QE}_+(f_{r,s})$. Nonetheless, in the case of symbolic/literal coefficients this approach is inconvenient, cannot deal with theoretical problems (such as those from Examples~\ref{ex.3} and \ref{ex.4}), and cannot lead to a system of discriminants for $\mathrm{QE}_+(f)$. Since our approach to $\mathrm{QE}_+(f)$ is based on the equivalence (\ref{e.equivalence2}), we next characterize the inequalities $f|_{T_{r,s}^f}\ge0$ for $(r,s)\in M^f$.

Some easy computations show that $f(1,-1,0_{n-2})=2(a+2c)$ and
\[\left\{\begin{array}{l}f_{r,s}(t)=A_{r,s}t^4+B_{r,s}t^3+C_{r,s}t^2+D_{r,s}t+E_{r,s},\\
f_{r,s}^\partial(t)=\alpha_{r,s}t^2+\beta_{r,s}t+\gamma_{r,s},\end{array}\right.\]
where
\[\left\{\begin{array}{l}A_{r,s}:=r[a+(b+c)r+dr^2+er^3],\\ B_{r,s}:=rs(b+2dr+4er^2),\\ C_{r,s}:=rs[2c+d(r+s)+6ers],\\ D_{r,s}:=rs(b+2ds+4es^2),\\ E_{r,s}:=s[a+(b+c)s+ds^2+es^3],\end{array}\right.\quad \left\{\begin{array}{l}\alpha_{r,s}:=4a+(3b+4c)r+2dr^2,\\ \beta_{r,s}:=4a+3b(r+s)+4drs,\\ \gamma_{r,s}:=4a+(3b+4c)s+2ds^2.\end{array}\right.\]
Therefore, for arbitrarily given real symbolic polynomials
\[\left\{\begin{array}{l}F(t)=At^4+Bt^3+Ct^2+Dt+E,\\ g(t)=\alpha t^2+\beta t+\gamma, \ \ g\mbox{
is non-constant},\end{array}\right.\]
we need to compute explicit discriminants for the problem
\begin{equation}\label{e.statement1}F(\theta)\ge0\,\mbox{ for every root }\,\theta>0\mbox{ of }g.\end{equation}
As $\deg(g)\in\{1,2\}$, analyzing the equation $g(t)=0$ in both possible cases ($\alpha\ne0$ or $\alpha=0\ne\beta$) shows the equivalence
\[(\ref{e.statement1})\iff(\ref{e.deg1})\mbox{\ \bf or\ }(\ref{e.complex})\mbox{\ \bf or\ }(\ref{e.deg2}),\]
where
\begin{eqnarray}\label{e.deg1}&&\left\{\begin{array}{l}\alpha=0,\,\beta\ne0,\\ \beta\gamma\ge0\mbox{\ \bf or\ }F\left(-\frac{\gamma}{\beta}\right)\ge0,\end{array}\right.\\[2mm]
\label{e.complex}&&\Delta:=\beta^2-4\alpha\gamma<0,\\[2mm]
\label{e.deg2}&&\left\{\begin{array}{l}\alpha\ne0,\,\Delta\ge0,\\ \alpha(\beta-\sqrt{\Delta})\ge0\mbox{\ \bf or\ }F\left(\frac{-\beta+\sqrt{\Delta}}{2\alpha}\right)\ge0,\\ \alpha(\beta+\sqrt{\Delta})\ge0\mbox{\ \bf or\ }F\left(\frac{-\beta-\sqrt{\Delta}}{2\alpha}\right)\ge0.\end{array}\right.\end{eqnarray}

For the elimination of $\sqrt{\Delta}$ from (\ref{e.deg2}) we need the following lemma.

\begin{lemma}\label{l.computer1}
Let $u,v,\delta\in\mathbb{R}$, with $\delta\ge0$. Then
\begin{eqnarray}
\label{e.equiv1}u+v\sqrt{\delta}\ge0\!\!\!&\iff&\!\!\!u,v\ge0\mbox{ \rm\bf or }\rho,u\ge0\mbox{ \rm\bf or }\rho\le0\le v\\
\label{e.equiv2}&\iff&\!\!\!u,v\ge0\mbox{ \rm\bf or }\rho,u\ge0\ge v\mbox{ \rm\bf or }\rho,u\le0\le v,
\end{eqnarray}
where $\rho:=u^2-v^2\delta$.
\end{lemma}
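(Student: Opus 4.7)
The plan is to reduce the inequality $u+v\sqrt{\delta}\ge0$ to a comparison of $u$ and $-v\sqrt{\delta}$ (or $vw$ where $w:=\sqrt{\delta}\ge0$), and to dispose of the radical by squaring in those sign regimes where squaring is order-preserving. Writing $w:=\sqrt{\delta}\ge0$, I would split the real plane $(u,v)$ into its four closed sign regions and analyze each separately.

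First, if $u\ge0$ and $v\ge0$, then $u+vw\ge0$ trivially, so the inequality holds. Second, if $u\le0$ and $v\le0$ with at least one strict, then $u+vw\le0$ and equality forces $u=0$ and $vw=0$, which is covered by the first regime (taking $v=0$) or by $\rho=u^2-v^2\delta\ge0$ with $u=0\ge v$. Third, in the mixed regime $u\ge0$, $v\le0$, the inequality $u+vw\ge0$ rewrites as $u\ge(-v)w$ with both sides non-negative, so by squaring it is equivalent to $u^2\ge v^2\delta$, i.e.\ $\rho\ge0$. Fourth, in the mixed regime $u\le0$, $v\ge0$, the inequality rewrites as $vw\ge-u$ with both sides non-negative, hence equivalent to $v^2\delta\ge u^2$, i.e.\ $\rho\le0$.

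Collecting these four analyses gives exactly the disjunction
\[u+v\sqrt{\delta}\ge0\iff (u\ge0\wedge v\ge0)\ \text{\bf or}\ (u\ge0\wedge v\le0\wedge\rho\ge0)\ \text{\bf or}\ (u\le0\wedge v\ge0\wedge\rho\le0),\]
which is tautologically the right-hand side of \eqref{e.equiv2}. To get \eqref{e.equiv1}, I would observe that weakening $v\le0$ to ``$v$ arbitrary'' in the second clause and $u\le0$ to ``$u$ arbitrary'' in the third clause does not change the disjunction, because any newly included point $(u,v)$ with both $u\ge0$ and $v\ge0$ already satisfies the first clause; this yields precisely ``$u,v\ge0$ \textbf{or} $\rho,u\ge0$ \textbf{or} $\rho\le0\le v$''.

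The only real obstacle is bookkeeping on the boundaries (where some of $u$, $v$, $\rho$ vanish), to make sure every equality case is absorbed by at least one disjunct with the weak inequalities as written; the squaring arguments themselves are routine since both sides in the mixed sign regimes are non-negative.
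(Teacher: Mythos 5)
Your case analysis is correct. Splitting on the signs of $u$ and $v$, noting that in the two mixed-sign regimes the inequality $u\ge -v\sqrt{\delta}$ (resp.\ $v\sqrt{\delta}\ge -u$) has both sides nonnegative so squaring is an equivalence, and then checking that relaxing the sign constraints in \eqref{e.equiv1} only adds points already captured by the clause $u,v\ge0$, is precisely the ``routine'' argument the paper leaves to the reader; there is no gap.
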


\begin{proof}
The proof is routine.
\end{proof}

Now assume that $\alpha\ne0$ and $\Delta\ge0$ as in (\ref{e.deg2}). Hence $F\left(\frac{-\beta\pm\sqrt{\Delta}}{2\alpha}\right)=\frac{P\pm Q\sqrt{\Delta}}{2\alpha^4}$ for some polynomial expressions $P,Q$, in the coefficients of $F$ and $g$. We next need to compute the expressions $P,Q$, and $P^2-Q^2\Delta$, since these will characterize the inequalities $F\left(\frac{-\beta\pm\sqrt{\Delta}}{2\alpha}\right)\ge0$ from (\ref{e.deg2}) via the equivalence (\ref{e.equiv1}). By Taylor's formula, we see that\ $\frac{P+Q\sqrt{\Delta}}{2\alpha^4}=F\left(\frac{-\beta+\sqrt{\Delta}}{2\alpha}\right)=\sum_{k=0}^4F^{(k)}\!\left(\frac{-\beta}{2\alpha}\right)\frac{(\sqrt{\Delta})^k}{k!(2\alpha)^k}$.\ As an easy computation shows, we may choose\footnote{$Q$ is not unique if $\Delta=0$, but we still can choose it as in (\ref{e.Q}).}
\begin{eqnarray}\label{e.P}
&&\!\begin{array}{ll}\!\!P=&\!\!\!A(\beta^4-4\beta^2\alpha\gamma+2\alpha^2\gamma^2)-B\beta\alpha(\beta^2-3\alpha\gamma)+C\alpha^2(\beta^2-2\alpha\gamma)\\ &-D\beta\alpha^3+2E\alpha^4,\end{array}\\
\label{e.Q}&&Q=-A\beta(\beta^2-2\alpha\gamma)+B\alpha(\beta^2-\alpha\gamma)-C\beta\alpha^2+D\alpha^3.
\end{eqnarray}
Let us observe that $R:=\frac{P^2-Q^2\Delta}{4\alpha^4}=\alpha^4F\left(\frac{-\beta+\sqrt{\Delta}}{2\alpha}\right)F\left(\frac{-\beta-\sqrt{\Delta}}{2\alpha}\right)$ is the resultant of the polynomials $g$ and $F$, and so
\begin{equation}\label{e.resultant1}R=\left|\begin{array}{cccccc}A&B&C&D&E&0\\ 0&A&B&C&D&E\\ \alpha&\beta&\gamma&0&0&0\\ 0&\alpha&\beta&\gamma&0&0\\ 0&0&\alpha&\beta&\gamma&0\\ 0&0&0&\alpha&\beta&\gamma\end{array}\right|.\end{equation}
By Lemma~\ref{l.computer1} we get the equivalences
\begin{eqnarray}
\label{e.expl1}P\pm Q\sqrt{\Delta}\ge0\!\!\!&\iff&\!\!\!P,\pm Q\ge0\mbox{ \bf or }R,P\ge0\mbox{ \bf or }R\le0\le\pm Q,\\
\label{e.expl2}\alpha(\beta\pm\sqrt{\Delta})\ge0\!\!\!&\iff&\!\!\!\pm\alpha,\pm\beta\ge0\mbox{ \bf or }\pm\beta,\pm\gamma\le0
\end{eqnarray}
(their proof uses (\ref{e.equiv1}) for (\ref{e.expl1}) and (\ref{e.equiv2}) for (\ref{e.expl2})). Hence (\ref{e.deg2}) is equivalent to
\begin{equation}\label{e.discriminants1}\left\{\begin{array}{l}\alpha,\beta\le0\mbox{ \bf or }\beta,\gamma\ge0\mbox{ \bf or }P,Q\ge0\mbox{ \bf or }R,P\ge0\mbox{ \bf or }R\le0\le Q,\\ \alpha,\beta\ge0\mbox{ \bf or }\beta,\gamma\le0\mbox{ \bf or }P\ge0\ge Q\mbox{ \bf or }R,P\ge0\mbox{ \bf or }R,Q\le0,\end{array}\right.\end{equation}
provided that $\alpha\ne0\le\Delta$.

\begin{notation}\label{n.QE1}
For $(F,g)=(f_{r,s},f_{r,s}^\partial)$, we write the conditions $(\ref{e.statement1})$--$(\ref{e.deg2}),(\ref{e.discriminants1})$ and the polynomials $P,Q,R$ from $(\ref{e.P})$-$(\ref{e.resultant1})$, as $(\ref{e.statement1})_{r,s}$-$(\ref{e.deg2})_{r,s},(\ref{e.discriminants1})_{r,s}$, and $P_{r,s},Q_{r,s},R_{r,s}$. Set $\Delta_{r,s}:=\beta_{r,s}^2-4\alpha_{r,s}\gamma_{r,s}$.
\end{notation}

As the expressions of $P_{r,s},Q_{r,s},R_{r,s}$ are exceedingly long (filling several pages), we will not reproduce them here. Let us mention that a Maple\,15 computation has shown that $a(r^2-rs+s^2)+crs(r+s)$ is a factor of $R_{r,s}$.

We are now able to state and prove our first main result.

\begin{theorem}[$\mathrm{QE}4_+$ algorithm]\label{t.discriminants1}
Let $f\in{\mathcal H}_4^{[n]}$. Then $\mathrm{QE}_+(f)$ is equivalent to
\[\left\{\begin{array}{l}(\ref{e.k})\mbox{ holds for }f,\\ (\ref{e.discriminants1})_{r,s}\mbox{ holds for every $(r,s)\in Z^f$, such that }\Delta_{r,s}\ge0\ne\alpha_{r,s}.\end{array}\right.\]
\end{theorem}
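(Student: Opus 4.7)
The plan is to combine Theorem~\ref{t.quartics} with the case analysis of $(\ref{e.statement1})$ carried out in Section~\ref{ss.discriminants1}. Starting from equivalence~$(\ref{e.equivalence2})$, I would decompose $f|_{T^f}\ge0$ along the partition $T^f=\{(1_k,0_{n-k})\,|\,k=1,\dots,n\}\cup\bigcup_{(r,s)\in M^f}T_{r,s}^f$. The first piece is exactly condition~$(\ref{e.k})$, and the second piece translates into: for each $(r,s)\in M^f$, $f_{r,s}(\theta)\ge0$ at every root $\theta>1$ of $f_{r,s}^\partial$. What remains is to match these root inequalities with $(\ref{e.discriminants1})_{r,s}$.

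For $(r,s)\in Z^f$ with $\alpha_{r,s}\ne0$ there is nothing to check if either $(r,s)\notin M^f$ or $\Delta_{r,s}<0$, while if $\Delta_{r,s}\ge0$ the condition $(\ref{e.statement1})_{r,s}$ is equivalent to $(\ref{e.deg2})_{r,s}$, and hence, via Lemma~\ref{l.computer1} and the resultant identity~$(\ref{e.resultant1})$, to $(\ref{e.discriminants1})_{r,s}$. This already settles the ``$\Rightarrow$'' direction of the theorem, since positivity of $f$ on $\mathbb{R}_+^n$ trivially forces $f_{r,s}\ge0$ at every positive root of $f_{r,s}^\partial$; and for ``$\Leftarrow$'' it supplies the test for every quadratic $f_{r,s}^\partial$, because the required inequality ``at roots $\theta>1$'' is weaker than the one ``at all positive roots''.

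The main obstacle is the degree-drop case $\alpha_{r,s}=0$, where $f_{r,s}^\partial$ is linear and $(\ref{e.discriminants1})_{r,s}$ is not imposed by the hypothesis. Here I would exploit the duality~$(\ref{e.dual})$ combined with the symmetry of $Z^f$ under $(r,s)\leftrightarrow(s,r)$. Reading $f_{r,s}^\partial(t)=t^2 f_{s,r}^\partial(1/t)$ coefficient-wise gives $\alpha_{s,r}=\gamma_{r,s}$, $\beta_{s,r}=\beta_{r,s}$ and $\gamma_{s,r}=\alpha_{r,s}=0$, whence $\Delta_{s,r}=\beta_{r,s}^2\ge0$. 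If moreover $\gamma_{r,s}=0$, the unique root of $f_{r,s}^\partial$ is $0\notin T_{r,s}^f$ and there is nothing to verify; otherwise $\alpha_{s,r}\ne0$ and $(s,r)$ falls into the class for which $(\ref{e.discriminants1})_{s,r}$ is postulated, so $f_{s,r}(1/\theta)\ge0$ at the positive root $1/\theta\in(0,1)$ of $f_{s,r}^\partial$, and the identity $f_{r,s}(\theta)=\theta^4 f_{s,r}(1/\theta)$ from~$(\ref{e.dual})$ yields $f_{r,s}(\theta)\ge0$. This disposes of the linear cases without introducing any extra hypothesis, and completes the equivalence.
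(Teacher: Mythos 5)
Your proposal is correct and follows essentially the same route as the paper's proof: both directions rest on Theorem~\ref{t.quartics}(\ref{e.equivalence2}), on the equivalence $(\ref{e.statement1})_{r,s}\iff(\ref{e.deg2})_{r,s}\iff(\ref{e.discriminants1})_{r,s}$ when $\alpha_{r,s}\ne0\le\Delta_{r,s}$, and on the switch relations $(\alpha_{s,r},\beta_{s,r},\gamma_{s,r})=(\gamma_{r,s},\beta_{r,s},\alpha_{r,s})$ together with the symmetry of $Z^f$ to dispose of the degenerate case $\alpha_{r,s}=0$. The only cosmetic difference is that you argue the ``$\Leftarrow$'' direction directly (positivity at each point of $T_{r,s}^f$), whereas the paper argues by contradiction, showing that a putative $\xi\in T^f$ with $f(\xi)<0$ would force $\alpha_{r,s}=\gamma_{r,s}=\beta_{r,s}=0$ and hence $(r,s)\notin M^f$.
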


\begin{proof}
``$\Rightarrow$''. Obviously, $\mathrm{QE}_+(f)$ yields (\ref{e.k}). Let us fix $(r,s)\in Z^f$, such that $\Delta_{r,s}\ge0\ne\alpha_{r,s}$. By $\mathrm{QE}_+(f)$, we deduce that $(\ref{e.statement1})_{r,s}$ holds. Since both $(\ref{e.deg1})_{r,s}$ and $(\ref{e.complex})_{r,s}$ are not fulfilled, it follows that $f$ satisfies $(\ref{e.deg2})_{r,s}$, that is, $(\ref{e.discriminants1})_{r,s}$ holds.\\
``$\Leftarrow$''. Under the hypothesis of the implication, suppose that $f(\xi)<0$ for some $\xi\in T^f$. As (\ref{e.k}) holds, we have $\xi=(\theta\cdot\!1_r,1_s,0_{n-r-s})\in T_{r,s}^f$ for some $(r,s)\in M^f$ and $\theta>1$, with $f_{r,s}^\partial(\theta)=0$. It is easily seen that $(s,r)\in M^f$ and
\begin{equation}\label{e.switch1}\left\{\begin{array}{l}(\alpha_{s,r},\beta_{s,r},\gamma_{s,r})=(\gamma_{r,s},\beta_{r,s},\alpha_{r,s}),\quad\Delta_{s,r}=\Delta_{r,s}\ge0,\\
f_{s,r}^\partial\left(\frac{1}{\theta}\right)=\frac{f_{r,s}^\partial(\theta)}{\theta^2}=0,\quad f_{s,r}\left(\frac{1}{\theta}\right)=\frac{f_{r,s}(\theta)}{\theta^4}=\frac{f(\xi)}{\theta^4}<0.\end{array}\right.\end{equation}
Hence both $(\ref{e.deg2})_{r,s}$ and $(\ref{e.deg2})_{s,r}$ are false, since so are $(\ref{e.statement1})_{r,s}$ and $(\ref{e.statement1})_{s,r}$, by (\ref{e.switch1}). We must have $\alpha_{r,s}=0$, since otherwise $(\ref{e.discriminants1})_{r,s}$ holds and is equivalent to $(\ref{e.deg2})_{r,s}$. Similarly, using again (\ref{e.switch1}) yields $\gamma_{r,s}=0$. Thus $\beta_{r,s}\theta=f_{r,s}^\partial(\theta)=0$ forces $\beta_{r,s}=0$, and hence $f_{r,s}^\partial\equiv0$, a contradiction with $(r,s)\in M^f$. We conclude that $f(\xi)\ge0$ for every $\xi\in T^f$, which yields $\mathrm{QE}_+(f)$, by Theorem~\ref{t.quartics}.
\end{proof}

\begin{remark}\label{r.useful}
In Theorem~\ref{t.discriminants1}, for $r+s\le n$ the condition $(\ref{e.discriminants1})_{r,s}$ either holds or needs not be satisfied whenever \ $\alpha_{r,s}=0$ {\rm or} $\beta_{r,s}=0$ {\rm or} $\Delta_{r,s}<0$ {\rm or} $\alpha_{r,s},\beta_{r,s},\gamma_{r,s}\ge0$ {\rm or} $\alpha_{r,s},\beta_{r,s},\gamma_{r,s}\le0$ {\rm or} $P_{r,s},R_{r,s}\ge0$.
\end{remark}

\subsection{The algorithm $\mathrm{QE}4_+$ and examples}\label{ss.algorithm1}

Theorem~\ref{t.discriminants1} leads to the algorithm $\mathrm{QE}4_+$ for solving $\mathrm{QE}_+(f)$ (see Figure~\ref{f.qe4+}). For $n\ge3$, the algorithm performs $n$ tests for the condition (\ref{e.k}) and at most $3n-6\ge\mathrm{card}(Z^f)$ sets of computations/tests (of the \emph{same complexity} for all $(r,s)\in Z^f$) for the conditions $(\ref{e.discriminants1})_{r,s}$. Hence $\mathrm{QE}4_+$ solves $\mathrm{QE}_+(f)$ in $\mathrm{lin}(n)$ time.

\begin{figure}
\begin{center} \includegraphics[width=12cm]{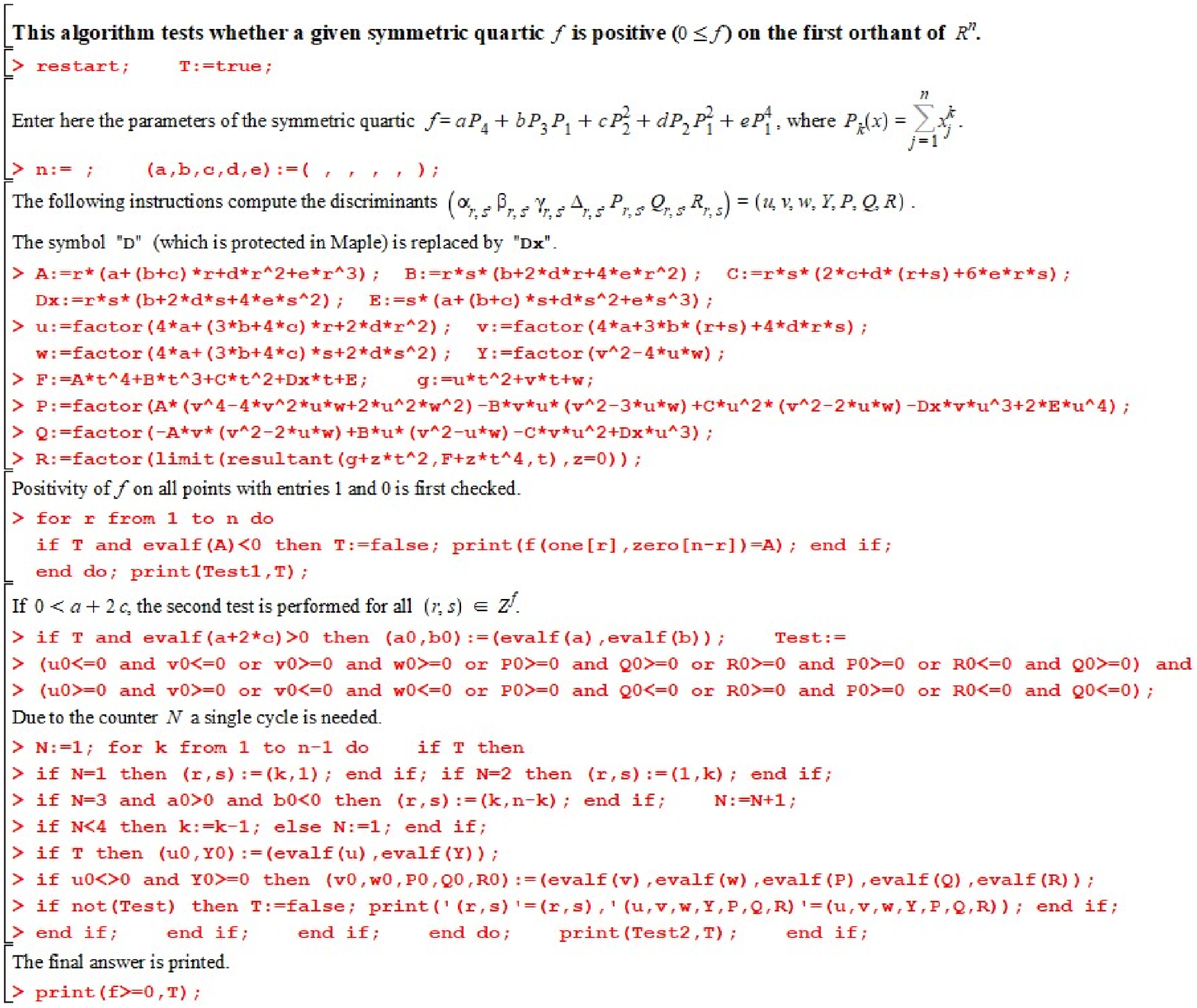} \caption{\footnotesize Maple\,15 implementation of $\mathrm{QE}4_+$.} \label{f.qe4+} \end{center}
\end{figure}

\begin{example}\label{ex.1}
If $f=24P_4-18P_3P_1-8P_2^2+9P_2P_1^2-P_1^4\in{\mathcal H}_4^{[4]}$, then  $f\ge0$ on $\mathbb{R}_+^4$.
\end{example}

\begin{proof}
The algorithm $\mathrm{QE}4_+$ prints the output ``$0\le f,\,\mbox{\it true}$''.
\end{proof}

\begin{example}\label{ex.2}
Let $f=24P_4-19P_3P_1-7P_2^2+9P_2P_1^2-P_1^4\in{\mathcal H}_4^{[n]}$. Then  $f\ge0$ on $\mathbb{R}_+^n$, if and only if $n\in\{2,3\}$.
\end{example}

\begin{proof}
For $n=4$, the algorithm $\mathrm{QE}4_+$ lists the values
\[(r,s) = (1,3),\quad(u,v,w,Y,P,Q,R)=(29,-24,3,228,-108828,14214,-12096),\]
and prints the output ``$0\le f,\,\mbox{\it false}$''. For $n=3$ the output is ``$0\le f,\,\mbox{\it true}$''. Since for integers $1\le p<q$ the first orthant $\mathbb{R}_+^p$ can be identified with $\mathbb{R}_+^p\times\{0_{q-p}\}\subset\mathbb{R}_+^q$, the conclusion follows.
\end{proof}

For symbolic/literal polynomials it may be difficult to decide on the sign of the discriminants, which also are literal expressions. In this case the following suggestions may help (these also apply to the examples from Section~\ref{ss.algorithm2}). Let $D_{r,s}$ denote any of the needed discriminants. Then:
\begin{itemize}
\item If $(r,s)=(k,1)$ or $(r,s)=(1,k)$ for some $k\in\{1,2,\dots,n-1\}$, then in $D_{r,s}$ make the substitutions $k:=u+1$ and $n:=u+p+2$, with $u,p\in\mathbb{N}$.
\item If $(r,s)=(k,n-k)$ for some $k\in\{2,\dots,n-2\}$, then in $D_{r,s}$ make the substitutions $k:=u+2$ and $n:=u+v+4$, with $u,v\in\mathbb{N}$ (see Example~\ref{ex.3}).
\end{itemize}

\begin{example}\label{ex.3}
Let $g=2nP_4-2(n+1)P_3P_1-nP_2^2+(n+3)P_2P_1^2-P_1^4\in{\mathcal H}_4^{[n]}$. Then $g\ge0$ on $\mathbb{R}_+^n$.
\end{example}

\begin{proof}
$g$ satisfies (\ref{e.k}), since\footnote{All needed computations were performed by running the first part of $\mathrm{QE}4_+$.} $g(1_k,0_{n-k})=k(k-1)(k-2)(n-k)\ge0$ for every $k\in\{1,2,\dots,n\}$. Running the first part of $\mathrm{QE}4_+$ gives
\[\left\{\begin{array}{l}P_{k,1}=16kn(n-3)^3(k-1)^5(k-2)[(k-3)n+k+1]\ge0=R_{k,1},\\ \alpha_{1,k}=0,\end{array}\right.\]
and so $(\ref{e.discriminants1})_{k,1}$ and $(\ref{e.discriminants1})_{1,k}$ either hold or need not be satisfied, by Remark~\ref{r.useful}.
If $n\le3$, the conclusion follows at once by Theorem~\ref{t.discriminants1}. Now assume that $n\ge4$, and fix $k\in\{2,\dots,n-2\}$. We have $(k,n-k)=(u+2,v+2)$ for some $u,v\in\mathbb{N}$, with $n=u+v+4$. Running the first part of $\mathrm{QE}4_+$ with these substitutions gives\\[2mm]
$P_{k,n-k}=16(u+2)(u+1)(v+2)(v+1+u)^2(4352+15872v+17408u+25504u^3+28856u^2+60496vu+94648vu^2+12912u^4+22136v^2+80272v^2u+4856v^4+78798vu^3+117100v^2u^2+50798v^3u+37789u^4v+89185u^3v^2+68231v^3u^2+15863v^4u+38625v^2u^4+45801v^3u^3+19405v^4u^2+14744v^3+776v^53964u^5+946u^6+227u^7+40u^8+2vu^9+3u^9+10v^5u^5+2v^6u^4+10835vu^5+2125u^6v+9835v^2u^5+343vu^7+16540v^3u^4+1595v^2u^6+40vu^8+11079v^4u^3+3260v^3u^5+2388v^5u+175v^2u^7+2615v^5u^2+3040v^4u^4+48v^6+360v^3u^6+139v^6u+1205v^5u^3+385v^4u^5+10v^2u^8+20v^3u^7+133v^6u^2+208v^5u^4+20v^4u^6+45v^6u^3)\ge0$,\\[1mm]
$R_{k,n-k}=16(u+2)^2(v+2)^2(v+1+u)^4(u+1)(v+1)(u+v+4)^2(v+u+2)^3(8+8v+8u+10vu+v^2u+vu^2)\ge0$.\\
Hence $(\ref{e.discriminants1})_{k,n-k}$ either holds or needs not be satisfied. By Theorem~\ref{t.discriminants1} we conclude that $\mathrm{QE}_+(g)$ holds.
\end{proof}

If $n\ge4$, then $g$ from Example~\ref{ex.3} also has the following property:
\[\varphi\in{\mathcal H}_4^{[n]}, \ 0\le\varphi\le g\mbox{ on }\mathbb{R}_+^n\Longrightarrow\varphi\in[0,1]\!\cdot\!g,\]
that is, $g$ is extremal (see~\cite[Th.\,16]{timofte3}). This means that the inequality $\mathrm{QE}_+(g)$ holds, but cannot be strengthened in ${\mathcal H}_4^{[n]}$. In the same situation are the polynomials $f$ from the above Example~\ref{ex.1} (see~\cite[Th.\,16]{timofte3}) and $h$ from Example~\ref{ex.4} below (see~\cite[Prop.\,17]{timofte3}). In other words, these are ``good quality'' examples.

\begin{example}\label{ex.4}
Let $h=-n(n-1)P_4+4(n-1)P_3P_1+(n^2-3n+3)P_2^2-2nP_2P_1^2+P_1^4\in{\mathcal H}_4^{[n]}$. Then $h\ge0$ on $\mathbb{R}_+^n$.
\end{example}

\begin{proof}
$h$ satisfies (\ref{e.k}), since $h(1_k,0_{n-k})=k(k-1)(n-k)(n-k-1)\ge0$ for every $k\in\{1,2,\dots,n\}$. By $a=-n(n-1)\le0$ (or by $b=4(n-1)\ge0$), it follows that $Z^h=\{(k,1)|\,1\le k\le n-1\}\cup\{(1,k)|\,1\le k\le n-1\}$. Running the first part of $\mathrm{QE}4_+$ gives
\[\left\{\begin{array}{l}P_{k,1}=256k(k-1)(n-1)(n-3)^3(n-k-1)^5(2nk-n-3k)\ge0=R_{k,1},\\ \alpha_{1,k}=0.\end{array}\right.\]
By Theorem~\ref{t.discriminants1} we conclude that $\mathrm{QE}_+(h)$ holds.
\end{proof}

\section{The problem $\mathrm{QE}(f)$ in ${\mathcal H}_4^{[n]}$}\label{s.QE2}

\subsection{Discriminants for $\mathrm{QE}(f)$}\label{ss.discriminants2}

According to (\ref{e.QE}), for every symmetric quartic $f\in{\mathcal H}_4^{[n]}$ the problem $\mathrm{QE}(f)$ reduces to the quantifier elimination problems
\[f(u\!\cdot\!1_r,v\!\cdot\!1_s)\ge0\quad\forall u,v\in\mathbb{R},\]
considered for all $r,s\in\mathbb{N}^*$, with $r+s=n$. Since $f$ is symmetric, we may restrict the algorithm $\tilde{\mathcal A}$ described in the proof of Theorem~\ref{t.efficient} to run only for $s\ge r\ge1$. For the restricted algorithm ${\mathcal Q}{\mathcal E}4$ obtained in this way the running time is reduced to the half, while the complexity is the same as that of $\tilde{\mathcal A}$. The number of operations performed by ${\mathcal Q}{\mathcal E}4$ on $f$ is
\[N({\mathcal Q}{\mathcal E}4,f)=N(4,2)\left\lfloor\frac n2\right\rfloor.\]
Hence ${\mathcal Q}{\mathcal E}4$ solves $\mathrm{QE}(f)$ in $\mathrm{lin}(n)$ time. As quartics are homogeneous, we see that
\begin{equation}\label{e.equivalence3}\mathrm{QE}(f)\mbox{ holds}\iff\mathrm{QE}(f_{k,n-k})\mbox{ holds for every\, }k\in\{1,\dots,n-1\},\end{equation}
where $f_{k,n-k}(t)=f(t\!\cdot\!1_k,1_{n-k})$ for every $t\in\mathbb{R}$. Therefore, for arbitrary real symbolic polynomial
\begin{equation}\label{e.model2}F(t)=At^4+Bt^3+Ct^2+Dt+E,\,\mbox{ with }\,A>0,\end{equation}
we need to compute explicit discriminants for the univariate problem $\mathrm{QE}(F)$, or for its equivalent form
\begin{equation}\label{e.equivalent}F(t)\ge0\,\mbox{ for every real root $t$ of the derivative }F'.\end{equation}
Solving (\ref{e.equivalent}) depends on the nature of the roots $z_1,z_2,z_3\in\mathbb{C}$ of the polynomial $F'\in\mathbb{R}[t]\subset\mathbb{C}[t]$ (and hence on its discriminant), as well as on the expressions
\begin{equation}\label{e.GHK}\left\{\begin{array}{l}G:=256A^3[F(z_1)+F(z_2)+F(z_3)],\\ H:=128A^3[F(z_1)F(z_2)+F(z_1)F(z_3)+F(z_2)F(z_3)],\\ K:=256A^3F(z_1)F(z_2)F(z_3).\end{array}\right.\end{equation}
Up to a strictly positive factor, the discriminant of $F'$ is
\begin{equation}\label{e.Delta}\Delta:=-108A^2D^2+4AC(27BD-8C^2)-9B^2(3BD-C^2).\end{equation}
In order to compute $G,H,K$, let us observe that for every $y\in\mathbb{R}$, the resultant of $F_y:=F+y$ and $F_y'=F'$ is
\[R(F_y,F')=(4A)^4\prod_{j=1}^3F_y(z_j)=256A^4y^3+A(Gy^2+2Hy+K).\]
Identifying here the coefficients of the polynomials in $y$ leads to
\begin{eqnarray}
\label{e.G}&&G=768A^3E-64A^2(3BD+2C^2)+144AB^2C-27B^4,\\
\label{e.H}&&\!\!\!\begin{array}{ll}H=&\!\!\!384A^3E^2-8A^2(24BDE+16C^2E-9CD^2)\\ &+A(144B^2CE-3B^2D^2-40BC^2D+8C^4)\\ &-B^2(27B^2E-9BCD+2C^3),\end{array}\\
\label{e.resultant2}&&K=\frac{R(F,F')}{A}=\left|\begin{array}{ccccccc}1\,&B&C&D&E&0&0\\ 0\,&A&B&C&D&E&0\\ 0\,&0&A&B&C&D&E\\ 4\,&3B&2C&D&0&0&0\\ 0\,&4A&3B&2C&D&0&0\\ 0\,&0&4A&3B&2C&D&0\\ 0\,&0&0&4A&3B&2C&D\end{array}\right|.
\end{eqnarray}

\begin{notation}\label{n.QE2}
\begin{description}
\item[\rm\bf(i)] The real polynomials $\Delta,G,H,K$ defined by the right-hand members of $(\ref{e.Delta})$--$(\ref{e.resultant2})$ may be considered even if $A=0$.
\item[\rm\bf(ii)] For $F=f_{r,n-r}$, we write the above expressions as $\Delta_r,G_r,H_r,K_r$. For these, the condition $(\ref{e.discriminants2})$ below will be referred to as $(\ref{e.discriminants2})_r$.
\end{description}
\end{notation}

Our next lemma may be viewed as a special case of Descartes' rule of signs.

\begin{lemma}\label{l.Descartes}
Let $u\in\mathbb{R}^m$. Then
\[u\in\mathbb{R}_+^m\iff e_k(u)\ge0\mbox{\, for every\, }k\in\{1,2,\dots,m\},\]
where $e_1, e_2, \dots, e_m$ denote the elementary symmetric functions in $m$ variables.
\end{lemma}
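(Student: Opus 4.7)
The plan is to reduce the statement to a simple observation about the polynomial whose roots are the $-u_i$. The forward direction ($\Rightarrow$) is immediate: if every component $u_i$ is non-negative, then each $e_k(u)$ is a finite sum of products of non-negative reals, hence non-negative. All the work is in the reverse direction.

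For ``$\Leftarrow$'', I would introduce the auxiliary monic polynomial
\[P_u(t):=\prod_{i=1}^{m}(t+u_i)=\sum_{k=0}^{m}e_k(u)\,t^{m-k},\]
where by convention $e_0(u)=1$. The hypothesis $e_k(u)\ge0$ for $k=1,\dots,m$ then says that every coefficient of $P_u$ is non-negative (with the leading one strictly positive). Consequently $P_u(t)>0$ for every $t>0$.

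The key step is to run this forward: suppose for contradiction that some component $u_{i_0}$ is negative. Then $\tau:=-u_{i_0}>0$, and by construction $P_u(\tau)=0$, which contradicts $P_u(\tau)>0$. Hence every $u_i\ge0$, i.e.\ $u\in\mathbb{R}_+^m$. No obstacle is really expected here: the argument is essentially a one-line application of the sign-of-coefficients test, so this lemma is little more than a bookkeeping tool used to convert the system $\{e_k(u)\ge0\}_{k=1}^m$ into the coordinate-wise positivity of $u$. I would only have to be slightly careful that the convention $e_0(u)=1$ is in force so that $P_u$ is genuinely monic and thus strictly positive on $(0,\infty)$.
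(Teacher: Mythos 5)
Your proof is correct and is essentially identical to the paper's: both arguments evaluate the monic polynomial $\prod_i(t+u_i)=\sum_k e_k(u)\,t^{m-k}$ at a positive point (you take $\tau=-u_{i_0}$ for an arbitrary negative component, the paper takes $-\min_j u_j$) and contradict strict positivity, which follows from the leading coefficient being $1$ and the remaining coefficients being nonnegative. The only difference is the cosmetic choice of which negative component to use.
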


\begin{proof}
We only need to prove ``$\Leftarrow$''. If $t:=\min_{1\le j\le m}u_j<0$, then the hypothesis yields $0<(-t)^m+\sum_{k=1}^me_k(u)(-t)^{m-k}=\prod_{k=1}^m(u_k-t)=0$, which is absurd. Hence $u\in\mathbb{R}_+^m$.
\end{proof}

The following needed result is a version of~\cite[Th.\,1]{p.r}, however, it is much easier to give a direct proof than to derive it from the cited result.

\begin{theorem}\label{t.tool}
Let a polynomial $F$ as in $(\ref{e.model2})$. Then $\mathrm{QE}(F)$ is equivalent to
\begin{equation}\label{e.discriminants2}K\ge0>\Delta\mbox{ \bf or }G,H,K\ge0.\end{equation}
\end{theorem}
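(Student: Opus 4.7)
The plan is to use the reformulation $(\ref{e.equivalent})$ and then split on the sign of $\Delta$. Since $A>0$, the quartic $F$ is coercive, so its infimum on $\mathbb{R}$ is attained at a real critical point, and $\mathrm{QE}(F)$ reduces to $F(\theta)\ge0$ at every real root $\theta$ of $F'$. Because $\Delta$ is a positive multiple of the discriminant of the cubic $F'$, its sign controls the nature of the roots $z_1,z_2,z_3\in\mathbb{C}$, naturally splitting the analysis into two cases.

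First I would handle the easier case $\Delta\ge0$, where all three $z_j$ are real (with multiplicities) and $\mathrm{QE}(F)$ is equivalent to $(F(z_1),F(z_2),F(z_3))\in\mathbb{R}_+^3$. Here Lemma~\ref{l.Descartes} applied with $m=3$ converts the positivity of the triple into the nonnegativity of its three elementary symmetric functions, and by $(\ref{e.GHK})$ these are precisely $G/(256A^3)$, $H/(128A^3)$, $K/(256A^3)$ with manifestly positive denominators. Thus in this regime $\mathrm{QE}(F)\iff G,H,K\ge0$, the second disjunct of $(\ref{e.discriminants2})$.

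Next I would dispatch $\Delta<0$, where $F'$ has a single real root $z_1$ and a complex conjugate pair $z_3=\bar z_2$. Only $F(z_1)$ governs $\mathrm{QE}(F)$, and the identity $K=256A^3\,F(z_1)|F(z_2)|^2$ makes $K\ge0$ equivalent to $F(z_1)\ge0$ provided $F(z_2)\ne0$. The delicate spot is the degenerate subcase $F(z_2)=0$: then $z_2$ is a common root of $F$ and $F'$, hence a double root of $F$, and together with its conjugate it exhausts all four roots of $F$, forcing $F(t)=A|t-z_2|^4\ge0$ on $\mathbb{R}$; this simultaneously yields $\mathrm{QE}(F)$ and $K=0$. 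Either way $\mathrm{QE}(F)\iff K\ge0$, which matches the first disjunct.

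To close, I would combine the two cases into the single disjunction $(\ref{e.discriminants2})$: the forward direction picks the relevant disjunct from the sign of $\Delta$, and for the reverse direction, the first disjunct directly forces $\mathrm{QE}(F)$ via the $\Delta<0$ analysis, while the second disjunct gives $\mathrm{QE}(F)$ immediately when $\Delta\ge0$ and still reduces to $K\ge0$ (which suffices) when $\Delta<0$. The main obstacle is really only the degenerate subcase $F(z_2)=0$ when $\Delta<0$; everything else is a matter of stitching Lemma~\ref{l.Descartes} together with the resultant computations already recorded in $(\ref{e.G})$--$(\ref{e.resultant2})$.
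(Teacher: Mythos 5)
Your proposal is correct and follows essentially the same route as the paper: both split on $\operatorname{sgn}\Delta$, invoke Lemma~\ref{l.Descartes} with $m=3$ via the identifications in $(\ref{e.GHK})$ when $\Delta\ge0$, and use $K=256A^3F(z_1)|F(z_2)|^2$ together with the degenerate subcase $F(z_2)=0$ when $\Delta<0$. The only difference is organizational — you prove case-by-case equivalences directly, whereas the paper proves the backward implication by contradiction (assuming $F(z_1)<0$ and deriving $F>0$ on $\mathbb{R}$ from $F(z_2)=0$); the mathematical content is identical.
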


\begin{proof}
As $A\ne0$, we have $\deg(F)=4$ and $\deg(F')=3$. Let $z_1\in\mathbb{R}$ and $z_2,z_3\in\mathbb{C}$ denote the roots of $F'$.\\
``$\Rightarrow$''. If $\Delta<0$, then $z_3=\bar z_2\in\mathbb{C}\setminus\mathbb{R}$, and so $F(z_1)\ge0,F(z_3)=\overline{F(z_2)}$. We thus get $K=256A^3F(z_1)|F(z_2)|^2\ge0>\Delta$. If $\Delta\ge0$, then $z_1,z_2,z_3\in\mathbb{R}$, and so $F(z_1),F(z_2),F(z_3)\ge0$. Since $A>0$, this leads by (\ref{e.GHK}) to $G,H,K\ge0$.\\
``$\Leftarrow$''. Suppose (\ref{e.equivalent}) false. There is no loss of generality in assuming that $F(z_1)<0$. As $K\ge0$, we have $F(z_2)F(z_3)\le0$. We next analyze two cases.\\
\emph{Case 1.} If $\Delta<0$, then $F(z_3)=\overline{F(z_2)}$ leads to $|F(z_2)|^2=F(z_2)F(z_3)\le0$, which yields $F(z_2)=F(z_3)=0$. Hence both $z_2,z_3\in\mathbb{C}\setminus\mathbb{R}$ are repeating roots of $F$, and so $F$ has no real roots. We thus get $F>0$ on $\mathbb{R}$, a contradiction.\\
\emph{Case 2.} If $\Delta\ge0$, we must have $G,H,K\ge0$. Since $(F(z_1),F(z_2),F(z_3))\in\mathbb{R}^3$, by Lemma~\ref{l.Descartes} it follows that $F(z_1)\ge0$, a contradiction.\\
We conclude that $F$ satisfies (\ref{e.equivalent}), that is, $\mathrm{QE}(F)$ holds.
\end{proof}

\begin{theorem}[$\mathrm{QE}4$ algorithm]\label{t.discriminants2}
Let $f\in{\mathcal H}_4^{[n]}$. Then $\mathrm{QE}(f)$ is equivalent to
\[\left\{\begin{array}{l}(\ref{e.k})\mbox{ holds for }f,\\ (\ref{e.discriminants2})_r\mbox{ holds for every }r\in\{1,2,\dots,n-1\}.\end{array}\right.\]
\end{theorem}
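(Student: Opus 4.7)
The plan is to chain the univariate reduction (\ref{e.equivalence3}) with the single-variable discriminant characterization from Theorem~\ref{t.tool}. The bridge between the two is the identity $A_{r,n-r}=f(1_r,0_{n-r})$, obtained by dividing $f_{r,n-r}(t)$ by $t^4$ and letting $t\to\infty$: it shows that the part of (\ref{e.k}) indexed by $k=r\in\{1,\dots,n-1\}$ (together with $k=n$) is both necessary for $\mathrm{QE}(f)$ and exactly the hypothesis $A_{r,n-r}\ge 0$ on the leading coefficient that Theorem~\ref{t.tool} needs.

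For the direction ``$\Rightarrow$'': $\mathrm{QE}(f)$ implies (\ref{e.k}) by specialization at $(1_k,0_{n-k})$, and for each $r\in\{1,\dots,n-1\}$ it implies $\mathrm{QE}(f_{r,n-r})$ via (\ref{e.equivalence3}). In the generic case $A_{r,n-r}>0$, Theorem~\ref{t.tool} converts this into $(\ref{e.discriminants2})_r$. For the direction ``$\Leftarrow$'': assuming (\ref{e.k}) and every $(\ref{e.discriminants2})_r$, each leading coefficient $A_{r,n-r}$ is nonnegative, and whenever it is strictly positive Theorem~\ref{t.tool} returns $\mathrm{QE}(f_{r,n-r})$; chaining over $r$ via (\ref{e.equivalence3}) then yields $\mathrm{QE}(f)$.

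The main obstacle is the degenerate boundary case $A_{r,n-r}=0$, where Theorem~\ref{t.tool} cannot be applied as stated. My plan here is to inspect the explicit formulas (\ref{e.Delta})--(\ref{e.resultant2}) with $A=0$ substituted: one reads off at once that $G_r=-27B_{r,n-r}^4\le 0$, so $(\ref{e.discriminants2})_r$ is forced into either the first disjunct $K_r\ge 0>\Delta_r$ or the subcase $B_{r,n-r}=0$, and simultaneously $f_{r,n-r}$ drops to degree at most $3$. I would then verify by direct inspection that (i) when $B_{r,n-r}\neq 0$, the cubic $f_{r,n-r}$ cannot satisfy $\mathrm{QE}(f_{r,n-r})$ and, using $\Delta_r=9B_{r,n-r}^2(C_{r,n-r}^2-3B_{r,n-r}D_{r,n-r})$, the specialized $(\ref{e.discriminants2})_r$ also fails; and (ii) when $B_{r,n-r}=0$, the residual quadratic $C_{r,n-r}t^2+D_{r,n-r}t+E_{r,n-r}$ is nonnegative on $\mathbb{R}$ if and only if $(\ref{e.discriminants2})_r$ holds in its specialised form, using also $E_{r,n-r}=f(1_{n-r},0_r)\ge 0$ from (\ref{e.k}). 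These case checks at $A=0$ are the only mildly technical point; everything else is the structural interplay of (\ref{e.equivalence3}) with Theorem~\ref{t.tool}.
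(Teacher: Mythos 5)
Your overall architecture — reduce via the equivalence~(\ref{e.equivalence3}) to the univariate problems $\mathrm{QE}(f_{r,n-r})$, then apply Theorem~\ref{t.tool} whenever the leading coefficient $A_{r,n-r}=f(1_r,0_{n-r})$ is strictly positive — is exactly the paper's. The gap is in how you propose to handle the boundary case $A_{r,n-r}=0$, and it is a real one.

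Your subcase~(ii), where $A_{r,n-r}=B_{r,n-r}=0$, does not work as stated. Substituting $A=B=0$ into the explicit formulas $(\ref{e.Delta})$, $(\ref{e.G})$, $(\ref{e.H})$ and into the determinant $(\ref{e.resultant2})$ (whose second column becomes the zero column) gives $\Delta_r=G_r=H_r=K_r=0$, so the condition $(\ref{e.discriminants2})_r$ reduces to the tautology $0,0,0\ge0$. It therefore carries \emph{no} information at all about the residual quadratic $C_{r,n-r}t^2+D_{r,n-r}t+E_{r,n-r}$; the claimed ``if and only if'' in~(ii) is false. The facts $E_{r,n-r}\ge0$ and $f_{r,n-r}(1)=f(1_n)\ge0$ drawn from~(\ref{e.k}) also do not suffice, since neither $C_{r,n-r}\ge0$ nor $D_{r,n-r}^2\le4C_{r,n-r}E_{r,n-r}$ follows from them. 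So your local analysis cannot recover $\mathrm{QE}(f_{r,n-r})$ from $(\ref{e.discriminants2})_r$ alone in this subcase. (In subcase~(i), $A_r=0\ne B_r$, your conclusion that $(\ref{e.discriminants2})_r$ fails is correct, but the argument is only sketched: besides $G_r=-27B_{r,n-r}^4<0$ you still must rule out $K_r\ge0>\Delta_r$, which requires identifying $K_r|_{A=0}$ with $B_{r,n-r}^2$ times the discriminant of the cubic and observing that a strictly monotone cubic has negative discriminant.)

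The missing idea is the duality between $r$ and $s:=n-r$. The paper records the ``switching'' identity $(A_s,B_s,C_s,D_s,E_s)=(E_r,D_r,C_r,B_r,A_r)$ together with $f_s(t)=t^4f_r(1/t)$ (equation~(\ref{e.switch2})). This lets one split the ``$\Leftarrow$'' direction into: $A_r>0$ (use Theorem~\ref{t.tool} directly), $A_s>0$ (use Theorem~\ref{t.tool} for $f_s$ and transfer by duality), and the truly degenerate case $A_r=A_s=0$, i.e.\ $A_r=E_r=0$. In that last case the formulas specialize to $\Delta_r=9B_r^2(C_r^2-3B_rD_r)$, $G_r=-27B_r^4$, $K_r=B_r^2D_r^2(C_r^2-4B_rD_r)$, and \emph{both} conditions $(\ref{e.discriminants2})_r$ and $(\ref{e.discriminants2})_{n-r}$ are invoked to force first $\Delta_r\ge0$, then $B_r=0$, and (by the dual computation) $D_r=0$, leaving $f_{r,n-r}(t)=C_rt^2$ with $C_r=f(1_n)\ge0$ by~(\ref{e.k}). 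Without pairing $(\ref{e.discriminants2})_r$ with $(\ref{e.discriminants2})_{n-r}$, your plan cannot close this case. Your ``$\Rightarrow$'' direction has the same (milder) omission: when $A_{r,n-r}=0$ one must still verify $(\ref{e.discriminants2})_r$, which the paper does by noting that $\mathrm{QE}(f_{r,n-r})$ then forces $B_{r,n-r}=0$ and hence $G_r=H_r=K_r=0$.
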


\begin{proof}
To simplify notation, we will write the polynomial $f_{r,n-r}$ and its coefficients $A_{r,n-r},\dots,E_{r,n-r}$ as $f_r$ and $A_r,\dots,E_r$.\\
``$\Rightarrow$''. Clearly, $\mathrm{QE}(f)$ yields (\ref{e.k}). Let us fix $r\in\{1,2,\dots,n-1\}$. As $\mathrm{QE}(f_r)$ holds, we have $A_r\ge0$ and $f_r$ has even degree (or $f_r\equiv0$). We next analyze two cases.\\
\emph{Case 1.} If $\deg(f_r)\le2$, then $A_r=B_r=0$ leads by (\ref{e.G})--(\ref{e.resultant2}) to $G_r=H_r=K_r=0$. Hence $(\ref{e.discriminants2})_r$ holds.\\
\emph{Case 2.} If $\deg(f_r)=4$, then $A_r>0$. By Theorem~\ref{t.tool}, $\mathrm{QE}(f_r)$ yields $(\ref{e.discriminants2})_r$.\\
From the above cases we conclude that $(\ref{e.discriminants2})_r$ holds for every $r\in\{1,2,\dots,n-1\}$.\\
``$\Leftarrow$''. Fix $r\in\{1,2,\dots,n-1\}$. Let us observe that
\begin{equation}\label{e.switch2}\left\{\begin{array}{l}(A_{n-r},B_{n-r},C_{n-r},D_{n-r},E_{n-r})=(E_r,D_r,C_r,B_r,A_r),\\ f_s(t)=t^4f_r\left(\frac1t\right)\mbox{\, for every\, }t\in\mathbb{R}\setminus\{0\}.\end{array}\right.\end{equation}
By (\ref{e.k}), we get $A_r=f(1_r,0_{n-r})\ge0$ and, similarly, $A_s\ge0$. There are three cases.\\
\emph{Case 1.} If $A_r>0$, then $\mathrm{QE}(f_r)$ holds, by $(\ref{e.discriminants2})_r$ and Theorem~\ref{t.tool}.\\
\emph{Case 2.} If $A_s>0$, as in the previous case it follows that $\mathrm{QE}(f_s)$ holds, and consequently so does $\mathrm{QE}(f_r)$, by (\ref{e.switch2}).\\
\emph{Case 3.} If $A_r=E_r=A_s=0$, some easy computations lead by (\ref{e.Delta}),\,(\ref{e.G}),\,(\ref{e.resultant2}), to
\begin{equation}\label{e.absurd}\left\{\begin{array}{l}\Delta_r=9B_r^2(C_r^2-3B_rD_r),\\ G_r=-27B_r^4,\quad K_r=B_r^2D_r^2(C_r^2-4B_rD_r).\end{array}\right.\end{equation}
As $(\ref{e.discriminants2})_r$ holds, we have $K_r\ge0$, and so $C_r^2\ge4B_rD_r$, which leads by (\ref{e.absurd}) to $\Delta_r\ge\frac{9}{4}B_r^2C_r^2\ge0$. Since $(\ref{e.discriminants2})_r$ holds, we must have $G_r\ge0$, which forces $B_r=0$, by (\ref{e.absurd}). A similar argument shows that $D_r=B_{n-r}=0$. By (\ref{e.k}) it follows that $C_r=f_r(1)=f(1_{r+s},0_{n-r-s})\ge0$, hence that
$f_r(t)=C_rt^2\ge0$ for every $t\in\mathbb{R}$.\\
From the above three cases, $\mathrm{QE}(f_r)$ holds for every $r\in\{1,2,\dots,n-1\}$. By Theorem~\ref{t.quartics} we conclude that $\mathrm{QE}(f)$ holds.
\end{proof}

\subsection{The algorithm $\mathrm{QE}4$ and examples}\label{ss.algorithm2}

Theorem~\ref{t.discriminants2} leads to the algorithm $\mathrm{QE}4$ for solving $\mathrm{QE}(f)$ (see Figure~\ref{f.qe4}). The algorithm performs $n$ tests for (\ref{e.k}) and $n-1$ sets of computations/tests (of the \emph{same complexity} for all $r\in\{1,\dots,n-1\}$) for the conditions $(\ref{e.discriminants2})_r$.

\begin{figure}\begin{center}
\includegraphics[width=12cm]{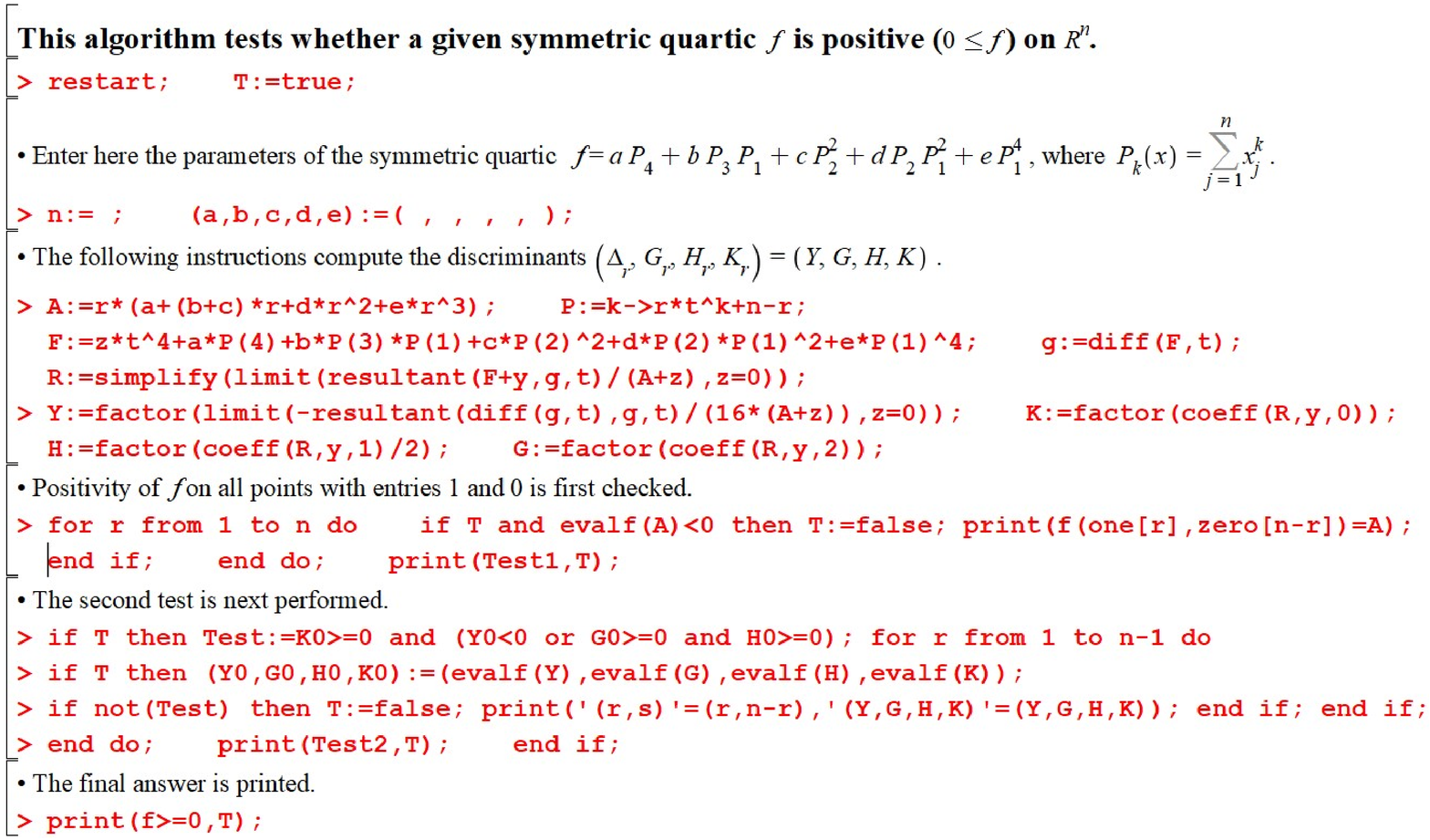}
\caption{\footnotesize Maple\,15 implementation of $\mathrm{QE}4$.}
\label{f.qe4}
\end{center}\end{figure}

\begin{example}\label{ex.5}
Let $f=-2(n-1)P_3P_1+(n-2)P_2^2+(n+1)P_2P_1^2-P_1^4\in{\mathcal H}_4^{[n]}$. Then\footnote{This is one of Newton's inequalities.} $f\ge0$ on $\mathbb{R}^n$.
\end{example}

\begin{proof}
$f$ satisfies (\ref{e.k}), since $f(1_k,0_{n-k})=k^2(k-1)(n-k)\ge0$ for every $k\in\{1,2,\dots,n\}$. For $r\in\{1,2,\dots,n-1\}$, we have $(r,n-r)=(u+1,v+1)$ for some $u,v\in\mathbb{N}$, with $n=u+v+2$. Running the first part of $\mathrm{QE}4$ with these substitutions gives
\begin{eqnarray*}
G_r\!\!\!&=&\!\!\!16u^2(u+1)^4(v+1)^4(u+v+1)^2(u^2+2u^3v+u^2v^2+22u^2v\\ &&+20uv+2u^3+u^4+20uv^2-8v^2)\ge0,\\
H_r\!\!\!&=&\!\!\!8uv(u+1)^5(v+1)^5(u+v)^3(u+v+1)^4\ge0,\\
K_r\!\!\!&=&\!\!\!0.
\end{eqnarray*}
By Theorem~\ref{t.discriminants2}, we conclude that $\mathrm{QE}(f)$ holds.
\end{proof}

\begin{example}\label{ex.6}
Let $h\in{\mathcal H}_4^{[n]}$ as in Example~\ref{ex.4}. Then $h\ge0$ on $\mathbb{R}^n$.
\end{example}

\begin{proof}
That $h$ satisfies (\ref{e.k}) was proved in Example~\ref{ex.4}. Running the first part of $\mathrm{QE}4$ gives $G_r=H_r=K_r=0$. By Theorem~\ref{t.discriminants2}, we conclude that $\mathrm{QE}(h)$ holds.
\end{proof}

\section{Numerical tests and conclusions}\label{s.tests}

In this section we describe some numerical tests performed\footnote{With Maple\,15 on a Dell Inspiron 5570 with 16GB RAM and Windows 10 OS.} in order to evaluate the efficiency of our algorithms.

Let us note that both $\mathrm{QE}4_+$ and $\mathrm{QE}4$ print the final output ``$0\le f,\,\mbox{\it false}$'', as soon as the variable $T$ changes from ``{\it true}'' to ``{\it false}''. Therefore, testing valid inequalities takes longer. In order to avoid quick negative outputs, the numerical coefficients $a,\dots,e$ as in (\ref{e.rep}) are chosen such that $\mathrm{QE}_+(f)$ (or $\mathrm{QE}(f)$) holds for every $n\ge2$. We thus compare in similar conditions the running time (seconds) and the amount of memory (MB) required for the execution of the algorithms, for several values of $n$. The results of our tests (all with final output ``$0\le f,\,\mbox{\it true}$'') are listed below.\vspace{5mm}

Algorithm $\mathrm{QE}4_+$: test for $(a,b,c,d,e)=(-6,8,3,-6,1)$.\\ \emph{Remark: only the first test is performed, since $a+2c\le0$.}\\
\begin{tabular}{c|c|c|c|c|c|c|}
$\frac{n}{1000000}=$ & $10$ & $20$ & $30$ & $40$ & $50$ & $100$\\ \hline
running time (sec.) & $18.82$ & $37.39$ & $57.84$ & $77.03$ & $95.32$ & $196.68$\\ \hline
memory (MB) & $49.42$ & $49.42$ & $49.42$ & $49.42$ & $49.42$ & $49.42$\\ \hline
\end{tabular}\vspace{5mm}

Algorithm $\mathrm{QE}4_+$: test for $(a,b,c,d,e)=(6,-4,-1,1,0)$.\\ \emph{Remark: both test are performed, since $a+2c>0$.}\\
\begin{tabular}{c|c|c|c|c|c|c|}
$\frac{n}{1000000}=$ & $0.5$ & $0.6$ & $0.7$ & $0.8$ & $0.9$ & $5$\\ \hline
running time (sec.) & $27.89$ & $32.93$ & $39.07$ & $44.57$ & $50.28$ & $283.14$\\ \hline
memory (MB) & $68.05$ & $68.05$ & $68.05$ & $68.05$ & $68.05$ & $68.05$\\ \hline
\end{tabular}\vspace{5mm}

Algorithm $\mathrm{QE}4_+$: test for $(a,b,c,d,e)=(-2,\frac{52+20\sqrt{10}}{27},\frac{83-20\sqrt{10}}{27},-4,1)$.\\ \emph{Remark: $a+2c>0$ and the coefficients are not all rational.}\\
\begin{tabular}{c|c|c|c|c|c|c|}
$\frac{n}{1000000}=$ & $10$ & $20$ & $30$ & $40$ & $50$ & $100$\\ \hline
running time (sec.) & $19.39$ & $38.96$ & $58.59$ & $80.81$ & $96.89$ & $194.81$\\ \hline
memory (MB) & $47.99$ & $47.99$ & $47.99$ & $47.99$ & $47.99$ & $47.99$\\ \hline
\end{tabular}\vspace{5mm}

Algorithm $\mathrm{QE}4$: test for $(a,b,c,d,e)=(0,-2,1,1,0)$.\\ \emph{Remark: none.}\\
\begin{tabular}{c|c|c|c|c|c|c|}
$\frac{n}{1000000}=$ & $1$ & $2$ & $3$ & $4$ & $5$ & $10$\\ \hline
running time (sec.) & $13.96$ & $27.84$ & $42.35$ & $56.12$ & $70.68$ & $141.46$\\ \hline
memory (MB) & $68.17$ & $68.17$ & $68.17$ & $68.17$ & $68.17$ & $68.23$\\ \hline
\end{tabular}\vspace{5mm}

For fixed numerical coefficients, the running time is almost linear in $n$, just as expected. For $f\in{\mathcal H}_4^{[n]}$, both $\mathrm{QE}_+(f)$ and $\mathrm{QE}(f)$ are efficiently solved by our algorithms. Even though both $\mathrm{QE}4_+$ and $\mathrm{QE}4$ are designed for numerical examples, they may assist in proving symbolic inequalities, as shown in Sections~\ref{ss.algorithm1} and~\ref{ss.algorithm2}.

We next indicate a method for finding the coefficients from (\ref{e.rep}). Since a given $f\in{\mathcal H}_4^{[n]}$ is usually expressed as a linear combination of monomial symmetric polynomials, assume that
\begin{equation}\label{e.msp}f=\alpha M_4+\beta M_{3,1}+\gamma M_{2,2}+\delta M_{2,1,1}+\varepsilon M_{1,1,1,1}\quad(\alpha,\beta,\gamma,\delta,\varepsilon\in\mathbb{R}).\end{equation}
Here, $M_{\lambda_1,\dots,\lambda_k}\in\mathbb{R}[x_1,\dots,x_n]$ is the sum of all distinct monomials $x_{i_1}^{\lambda_1}\cdots x_{i_k}^{\lambda_k}$, with distinct $i_1,\dots,i_k\in\{1,\dots,n\}$ (by convention, the sum vanishes if $k>n$). An easy computation shows that switching from (\ref{e.msp}) to the representation (\ref{e.rep}) may be done by using the equality
\[\left(\begin{array}{c}a\\b\\c\\d\\e\end{array}\right)=\left(\begin{array}{rrrrr} 1&-1&-\frac12&1&-\frac14\\ 0&1&0&-1&\frac13\\ 0&0&\frac12&-\frac12&\frac18\\ 0&0&0&\frac12&-\frac14\\ 0&0&0&0&\frac1{24} \end{array}\right)\cdot\left(\begin{array}{c}\alpha\\ \beta\\ \gamma\\ \delta\\ \varepsilon\end{array}\right).\]
Thus both algorithms may be modified in order to accept as input the coefficients $\alpha,\dots,\varepsilon$ from (\ref{e.msp}).


\begin{thebibliography}{00}
\bibitem{b.c.r}   J. Bochnak, M. Coste and M.-F. Roy, G\'eom\'etrie alg\'ebrique r\'eele (French), Springer-Verlag, Berlin, 1987.
\bibitem{c.l.r}   M. D. Choi, T. Y. Lam and B. Reznick, {\it Even symmetric sextics}, Math. Z. {\bf195} (1987), 559--580.
\bibitem{c.c.s}   A. Cohen, H. Cuypers and H. Sterk, Some Tapas of Computer Algebra, Springer-Verlag, Berlin, 1999.
\bibitem{harris}   W. R. Harris, {\it Real even symmetric ternary forms}, J. Algebra {\bf 222} (1999), 204--245.
\bibitem{jacobson}   N. Jacobson, Basic Algebra I. Second edition. W. H. Freeman and Company, New York, 1985.
\bibitem{p.r}   V. Powers and B. Reznick, {\it Notes towards a constructive proof of Hilbert's theorem on ternary quartics}, Contemp. Math. {\bf272} (2000), 209--227.
\bibitem{timofte1}   V. Timofte, {\it On the positivity of symmetric polynomial functions. Part I: General results}, J. Math. Anal. Appl. {\bf 284} (2003), 174--190.
\bibitem{timofte2}   V. Timofte, {\it On the positivity of symmetric polynomial functions. Part II: Lattice general results and positivity criteria for degrees $4$ and $5$}, J. Math. Anal. Appl. {\bf 304} (2005), 652--667.
\bibitem{timofte3}   V. Timofte, {\it On the positivity of symmetric polynomial functions. Part III: Extremal polynomials of degree $4$}, J. Math. Anal. Appl. {\bf 307} (2005), 565--578.
\end{thebibliography}
\end{document}